\newcommand{\supp}{\operatorname{supp}}
\newcommand{\Be}{\begin{equation}}
\newcommand{\Ee}{\end{equation}}
\numberwithin{equation}{section}
\newtheorem{thm}{Theorem}[section]
\newtheorem{lem}[thm]{Lemma}
\theoremstyle{remark}
\newtheorem{rem}{Remark}
\newcommand{\il}{I(\lambda, \Phi, \psi)}
\title{Uniform stationary phase estimate  \\ with limited smoothness}
\author[S. Lee]{Sanghyuk Lee}
\author[S. Oh]{Sewook  Oh}
\address{Department of Mathematical Sciences and RIM, Seoul National University, Seoul 08826, Republic of Korea}
\email{shklee@snu.ac.kr}
\address{School of Mathematics, Korea Institute for Advanced Study, Seoul 02455, Republic of Korea}
\email{sewookoh@kias.re.kr}
\subjclass[2010]{42B20} \keywords{Stationary phase,  oscillatory integral, uniform estimate}
\begin{document}
\begin{abstract}  
 In this paper, we consider the uniform estimate for the oscillatory integral with stationary phase, which was previously studied by Alazard--Burq--Zuily.  We significantly reduce the order of required regularity condition on  the phase and amplitude functions for the uniform  estimate.  We also study estimates for the oscillatory integrals of which phase and amplitude functions depend on the oscillation parameter.    The novelty of this article lies in  the use of the wave packet decomposition, which transforms the decay estimate for the oscillatory integral to the disjointness property of the supports of wave packets.   The latter  is  geometric in its nature and less sensitive to the smoothness of the phase and amplitude functions.  
 \end{abstract}

\maketitle

\section{Introduction}

Let $\psi \in C^\infty_0(\mathbb R^d)$ and  $\Phi$ be smooth  on the support of $\psi$.   For $\lambda\ge 1$,  we set 
\begin{equation}
\label{def1}
I(\lambda, \Phi, \psi)=\int e^{i\lambda\Phi(x)}\psi(x)dx.
\end{equation}
If $\Phi$ is nondegenerate, that is to say, the hessian matrix of 
$\Phi$ is nonsingular on the support of $\psi$,  the stationary phase method  gives 
\newcommand{\B}{\mathfrak B}
\begin{equation}
\label{stationary}
|\il |\le \B\lambda^{-{d\over 2}}
\end{equation}
with a constant $\B$ depending on $\Phi$. (See, for example, H\"ormander \cite{Ho} or Stein \cite{St}.) 
The stationary phase method is an important tool, which has a wide range of applications. 

There are various occasions where we need the  estimate \eqref{stationary} with $\B$ independent of the phase $\Phi$ and $\psi$ while they  are normalized in a suitable way  (see \cite{BCD, FRT, KPV}).  Besides,  a uniform bound on the stationary phase estimate makes it possible to recover the optimal bound for degenerate phase if a suitable decomposition away from the degeneracy is available  (see, for example, \cite{BLN}).  In one dimension,  thanks to van der Corput's Lemma it is relatively easy to get the uniform estimate as long as we have a lower bound on the second derivative  $|\Phi''|$ (see \cite{St}).
However, in higher dimensions, precise control of the behavior, with respect to the phase and symbol, of the constant $\B$ becomes far less trivial.

\subsection*{Uniform stationary phase estimate} We set $\mathbb B_r=\{ x\in \mathbb R^d: |x|<r\}$. 
Without loss of generality, we may assume that $\supp \psi\subset \mathbb B_1$ and $\Phi$ is smooth on $\mathbb B_1$. For $k\in \mathbb{N}_0$ and $0<\alpha<1$,  let us denote by $C^{k,\alpha}$ the H\"older space. For simplicity we denote $C^{k+\alpha}=C^{k,\alpha}.$ 
And define $\lVert \phi \rVert_{C^{r}(\mathbb B_1)}= \lVert \phi \rVert_{C^{[r]}(\mathbb B_1)} + \sum_{|\alpha|= [r]} |  \mathcal{D}^\alpha \phi|_{C^{0,r-[r]}(\mathbb B_1)}$ where $|f|_{C^{0,\beta}}=\sup_{(x,y)\in \mathbb B_1\times \mathbb B_1: x\neq y} |f(x)-f(y)|/|x-y|^\beta$  for $0<\beta<1$. 
For $r\ge 2$, we define
\begin{align*}
\mathcal{P}_r:= \sum_{|\beta|=2}\lVert \mathcal{D}^\beta \Phi\rVert_{C^{r-2}(\mathbb B_1)} , \  \ \mathcal{A}_r:= \lVert \psi\rVert_{C^{r}(\mathbb B_1)},
\end{align*}
which respectively control the phase and the amplitude functions.   
Note that $\mathcal{P}_r$ is determined by derivatives of degree bigger than or equal to $2$.  We also set  \[ L^\ast (\Phi)=\inf_{x\in \mathbb B_1} |(\det \mathrm{H} \Phi(x))|.\]
The following is due to  Alazard--Burq--Zuily \cite{ABZ1}. 

\begin{thm}[{\cite[Theorem 3]{ABZ1}}]
\label{thm:ABZ1} 
 Let $\Phi \in C^{d+2}(\mathbb B_1)$ and $\psi \in C_c^{d+1}(\mathbb B_{1-\epsilon_0})$ for some small constant $\epsilon_0>0$. Suppose that 
$\mathbb B_1  \ni x \mapsto \nabla\Phi(x)$  is injective  and $ L^\ast (\Phi)>0$,   then we have
\begin{equation}
\label{ABZ1}
\lambda^{{d\over 2}}|I(\lambda, \Phi, \psi)|\le {C\over  {\  L^\ast (\Phi)}} (1+\mathcal{P}_{d+2}^{d \over 2})\mathcal{A}_{d+1}
\end{equation}
with $C$ depending  only on $d$ and $\epsilon_0$.
\end{thm}

%injectivity assumption 

The regularity assumption  
in Theorem \ref{thm:ABZ1}  seems to be rather too strong and unlikely to be optimal.  
In this note, we attempt to relax the  regularity condition for 
the uniform stationary phase estimate.  
However, 
even for the  phase 
without a stationary point, i.e., $\nabla \Phi\neq 0$ on the support of $\psi$,  in order to get the decay of order $\lambda^{-\frac d2}$ (\eqref{stationary}), it is necessary to carry out integration by parts at least $d/2$ times, so  we need to assume that
$\Phi, \psi\in C^k$, $k\ge d/2$.   
In this regard, it is natural  to expect that a threshold should be bigger than $d/2$ for the uniform stationary phase estimate to hold.

The following is our first result which   significantly improves the regularity requirement in Theorem \ref{thm:ABZ1}. It is remarkable that  this brings the order of regularity close to the threshold $d/2$.

\begin{thm}\label{PS0}  
Let $n_0> {{(d+2)}/{2}}$. Let $\Phi \in C^{n_0}(\mathbb B_1)$ and $\psi \in C^{[{{d}/{2}}]+1}_c(\mathbb B_{1-\epsilon_0})$  for some small constant $\epsilon_0>0$.  Suppose that 
$\mathbb B_1\ni x \mapsto \nabla\Phi(x)$  is injective  and $ L^\ast (\Phi)>0$,  then there is a constant $C$, depending only on $n_0,\epsilon_0$, and $d$,   such that  
\begin{align}
\label{main-est}
\lambda^{{d\over2}}|I(\lambda, \Phi, \psi)| \le  {C \over  L^\ast (\Phi)}(1+\mathcal{P}_{n_0}^{\frac d2}) \mathcal{A}_{[{{d}\over{2}}]+1} . 
\end{align}
\end{thm}

The approach in \cite{ABZ1} relies  on integration by parts  with respect to
the operator $\mathcal{L}= |\nabla\Phi|^{-2 }{\nabla\Phi \cdot \nabla }$ to control a part of the integral $I(\lambda, \Phi, \psi)$ which is away from the critical point.  This requires 
extra order of differentiability for phase and amplitude functions. 
To avoid this,  we take a different approach based on the wave packet decomposition which has been widely used in the study of the restriction problem.  The localization in the phase space gives a significant advantage in controlling the phase function under weaker regularity assumptions.  The primary advantage of wave packet decomposition is to reduce 
 the estimate for the oscillatory integral to a problem of estimating a geometric quantity, more precisely, the degree of overlap between the supports of the wave packets (e.g., Section \ref{sec:wpd}).

\begin{rem}  \label{rem1}
If we remove the injectivity assumption in Theorem \ref{PS0}, using \cite[Lemma 7]{ABZ1}, one can easily show  that
\begin{align*}
\lambda^{{d\over2}}|I(\lambda, \Phi, \psi)| \le  {C \over  (L^\ast (\Phi))^{d+1}}(1+\mathcal{P}_{n_0}^{2d+2}) \mathcal{A}_{[{{d}\over{2}}]+1} . 
\end{align*}
\end{rem}

\subsection*{Oscillatory integral depending on $\lambda$} 
Now we consider the oscillatory integrals of which phase and amplitude functions are no longer independent of the oscillatory parameter $\lambda$.  In such cases, the oscillatory integral estimate becomes more involved than the case where the phase and amplitude are independent of $\lambda$.  In practice, those kinds of oscillatory integrals  typically appear as results of applying cutoff functions depending on $\lambda$.  

Let us set 
\[  {\widetilde I} (\lambda):= I(\lambda, \Phi_\lambda, \psi_\lambda).\]  
Here, we use the notation $ \Phi_\lambda$ and  $ \psi_\lambda$ to indicate that the phase and amplitude functions depend on 
$\lambda$.  We consider  ${\widetilde I} (\lambda)$ under  the following conditions: 
  \begin{align}
\label{LSA-psi}
 |\mathcal{D}_x^{\gamma}\psi_\lambda(x)|&\le\tilde{\mathcal  A} \lambda^{|\gamma|\beta}, \quad\quad \ \ 0\le|\gamma|\le d+1,
 \\
\label{LSA-Phi1}
|\mathcal{D}_x^{\gamma}\Phi_\lambda(x)|&\le\tilde{\mathcal P}\lambda^{\beta (|\gamma|-2)}, \quad  2\le|\gamma|\le d+1,
\end{align}
for $x \in \mathbb B_1$ and  some positive constants  $\tilde{\mathcal  A}$ and $\tilde {\mathcal P}$. 
A weaker decay in $\lambda$ is naturally expected by the presence of extra power of $\lambda$ in \eqref{LSA-psi} and \eqref{LSA-Phi1}. However, 
for $0<\beta\le 1/2$, it is easy to show  the best possible decay ${\widetilde I} (\lambda)=O(\lambda^{-d/2})$, for example,  by adapting the argument used to prove  Theorem \ref{PS0} (see Section \ref{sec:pa}). If $\beta$ in \eqref{LSA-psi} and \eqref{LSA-Phi1} is bigger than $1/2$, then  the bound  $O(\lambda^{-d/2})$ is no longer available (see Remark  \ref{remark3} below).  Tacy \cite{M}  showed that $ {\widetilde I} (\lambda)\le C L^\ast (\Phi_\lambda)^{-1}\lambda^{-d(1-\beta)}$ under a restrictive  assumption on the phase.  More precisely,  it was assumed in \cite{M} that there exist $\mu\ge \lambda^{-1+\beta}$ and $C_\gamma>0$ such that
 all eigenvalues of $ \mathrm{H}\Phi_\lambda(x_c)$ are in $[ \mu,2\mu]$ and  $|\mathcal{D}_x^{\gamma}\Phi_\lambda(x)|\le C_\gamma\mu$ for all multi-indices $\gamma$. Here, $x_c$ denotes the critical point of $\Phi$. 

In what follows we show that a  similar result remains valid in a more general and natural setting.

\begin{thm}\label{LSE}
Let $1/2 \le\beta< 1$ and $0<\epsilon_0\ll 1$. 
Let $ \Phi_\lambda\in C^{d+1}(\mathbb B_1)$ and $\psi_\lambda\in C^{d+1}_c(\mathbb B_{1-\epsilon_0})$ satisfy \eqref{LSA-psi} and \eqref{LSA-Phi1}.  
Suppose that  $\mathbb B_1 \ni x  \mapsto \nabla_x\Phi_\lambda(x)$ is injective and $ L^\ast (\Phi_\lambda)>0$.  Then there exists a constant $C$ such that
\Be 
\label{i-lambda}
|{\widetilde I}(\lambda)|\le  \frac C{L^\ast (\Phi_\lambda)}(1+\tilde {\mathcal P}^{d\beta})\tilde{\mathcal  A}\,\lambda^{-d(1-\beta)}.
\Ee 
\end{thm}

%\begin{rem} \label{example2} 	When $\beta=1/2$ and $d$ is an even integer,  the bound  $({L^\ast (\Phi_\lambda)})^{-1}$ in \eqref{i-lambda} is optimal in that it can not be replaced by $({L^\ast (\Phi_\lambda)})^{-1+\epsilon}$ for any $\epsilon>0$. This can be easily seen using the example in  \cite[p.406]{G}.   	Let us take $\Phi_\lambda(x_1, x_2)=(x_2+x_1^2)^2$ and $\psi_\lambda(x_1,x_2)=\eta(\lambda^{\frac 12}\det \mathrm{H}\Phi_\lambda (x))\phi(x)$ where $\eta$ is a smooth cutoff supported in $[1/2, 2]$ and $\phi$ is a smooth nonnegative cutoff function supported in $\mathbb B_1\in\mathbb{R}^2$. Since $\det \mathrm{H}\Phi_\lambda (x)=8(x_2+x_1^2)$,  $|{\widetilde I}(\lambda)|\ge C^{-1}\lambda^{-\frac 12} \ge (2C)^{-1} L^\ast (\Phi_\lambda)^{-1}\lambda^{-1}$ for some $C$. 	Since $d$ is even, tensor products of the two dimensional examples again shows the sharpness of the bound $L^\ast (\Phi)(\lambda)^{-1}$. 	 \end{rem}

\begin{rem}
\label{remark3}	
When $1/2\le\beta\le1$, the decay rate $d(1-\beta)$ in \eqref{i-lambda} is optimal.  To see this,  set $\Phi_\lambda(x)=|x|^2$ and  $\alpha=2\beta-1$. We consider 
 \[\psi_\lambda(x)=\sum_{\nu\in\lambda^{-\alpha}\mathbb{Z}^d\cap \mathbb B_{1/2}}\phi(2\lambda^{\alpha}(\lambda^{(1-\alpha)/2}x-\nu))e^{-i\lambda^\alpha|\nu|^2},\]
where $\phi\in C^\infty_c(\mathbb B_1)$ is nonnegative and $\phi\ge1$ on $\mathbb B_{1/2}$.
Then, it is easy to see that $\psi$ satisfies \eqref{LSA-psi} with $\beta=(\alpha+1)/2$. Set  
\[I_{\lambda,\nu}(\lambda)=\int e^{i\lambda^\alpha(|\lambda^{(1-\alpha)/2}x|^2-|\nu|^2)}\phi(2\lambda^{\alpha}(\lambda^{(1-\alpha)/2}x-\nu))dx.\]  
Hence,  we have ${\widetilde I}(\lambda)=\sum_\nu I_{\lambda,\nu}(\lambda)$, and 
$I_{\lambda,\nu}(\lambda)=\lambda^{-d(1+\alpha)/2}\int e^{ix\cdot(2\nu+\lambda^{-\alpha}x)}\phi(2x)dx$  changing  variables. Since $|x|,|\nu|\le 1/2$, $Re(I_{\lambda,\nu}(\lambda))\ge C^{-1} \lambda^{-d(1+\alpha)/2}$ for some constant $C>0$. This gives  $|{\widetilde I}(\lambda)|\gtrsim \lambda^{-d(1-\beta)}$ as desired.
\end{rem}

\begin{rem}\label{rem4}
	If  $0<\beta<1/2$, we can get the same result under a weaker regularity assumption in a similar manner as in Theorem \ref{PS0}. Precisely, we only need \eqref{LSA-psi} for $0\le |\gamma| \le [d/({2-2\beta})]+1$, and \eqref{LSA-Phi1} for $2\le |\gamma| \le [ (d-2)/({2-2\beta})]+3$.  This can be shown by modifying the proof of Theorem \ref{PS0}. Instead of providing the detail  we leave its proof to the interested reader. 
\end{rem}

\subsection*{Notation} For positive $A,B$ numbers,  we say $A\lesssim B$ if there exists $C$ depending only on $d,\epsilon_0,n_0,\beta$ such that $A\le CB$. 

\section{Uniform stationary phase estimates}

\subsection{Wave packet decomposition}\label{sec:wpd}
We begin with providing an alternative proof of  the result in \cite{ABZ1}, making use of the wave packet decomposition.  In fact, we show 
\begin{align}
\label{abz-result}
\lambda^{{d \over 2}}|I(\lambda, \Phi, \psi)| 
&\lesssim  L^\ast (\Phi)^{-1}(1+\mathcal{P}_{d+1}^{2d+1}) \mathcal{A}_{d+1}.
\end{align}
Note that  $I(\lambda,\Phi,\psi)=I(t\lambda,t^{-1}\Phi,\psi)$. Taking $t=1+\mathcal{P}_{d+1}$, we can recover the result of Alazard--Burq--Zuily, that is to say,  Theorem 
\ref{thm:ABZ1}.   More precisely, we get a slightly stronger result since we need only to control   derivatives of the phase and amplitude up to $(d+1)$-th  order.

Let  $\phi$ be a smooth nonnegative function such that $\sum_{k \in \mathbb{Z}^d} \phi(x-k)=1$ and $\supp \phi \subset \mathbb B_{{1}}.$ For $\nu\in \lambda^{-{1/2}}\mathbb{Z}^d,$ we set 
\[I(\lambda,\nu)= \int e^{i\lambda\Phi(x)}\psi(x) \phi(\lambda^{1/2}(x-\nu)) dx.\] 
Note that $I(\lambda,\nu)\neq 0$ only if $ \nu \in  \supp \psi+ O(\lambda^{-1/2})$. 
Since $\supp \psi \subset \mathbb B_{1-\epsilon_0}$, for $\lambda\ge  10/\epsilon_0^2$ we  decompose
\begin{align}
\label{wp-decomp}
I(\lambda, \Phi, \psi)=\sum_{\nu \in \mathcal L_\lambda}  I(\lambda,\nu), 
\end{align}
where $\mathcal L_\lambda =\lambda^{-{1/2}}\mathbb{Z}^d \cap \mathbb B_{1}$.  
For  $\nu\in \mathcal L_\lambda$,  we set 
\begin{align*}
\Phi^{\lambda, \nu}(x)&=\Phi(\lambda^{-{1 \over 2}}x + \nu)-\Phi(\nu)-\lambda^{-{1 \over 2}}\nabla\Phi(\nu) \cdot x,
\\
\chi^{\lambda, \nu}(x)&=e^{i\lambda\Phi^{\lambda, \nu}(x)}\psi(\lambda^{-{1 \over 2}}x + \nu)\tilde{\phi}(x), 
\end{align*}
where $\tilde{\phi} \in C^{\infty}_{0}$ is fixed nonnegative function satisfying $\tilde{\phi}\equiv1$ on $\mathbb B_1$ and $supp \ \tilde{\phi} \subset \mathbb B_{2}$.
Changing  variables $x\to \lambda^{-{1/ 2}}x + \nu$,  we have 
\Be
\label{il}
I(\lambda, \nu) =\lambda^{-{d \over 2}}
e^{i\lambda\Phi( \nu)} \int e^{i\lambda^{{1 \over 2}}\nabla\Phi(\nu) \cdot x} \chi^{\lambda, \nu}(x)dx.
\Ee

By the mean value theorem we have
\begin{align}
\label{dphi1}
 |\nabla (\lambda \Phi^{\lambda, \nu})(x)|
                   &\lesssim \mathcal{P}_2,  \quad x \in \supp \chi^{\lambda, \nu} . 
 \end{align}
A routine  computation shows 
 \begin{align}
 \label{dphi2}
\| \mathcal{D}_x^{\gamma}(\lambda\Phi^{\lambda, \nu})\|_{C(\mathbb B_{2})}
                  & \lesssim \mathcal{P}_{|\gamma|} \lambda^{-{\frac12}(|\gamma|-2)},
\\
\label{dphi3}
\lVert \mathcal{D}_x^{\gamma}(\lambda\Phi^{\lambda, \nu})\lVert_{C^{\beta}(\mathbb B_{2})}
                                    &\lesssim \mathcal{P}_{|\gamma|+\beta}\lambda^{-{\frac12}(|\gamma|-2)}
\end{align} 
 for $|\gamma|\ge 2$ and  $0<\beta<1$. 
 If  $\Phi$ and $\psi\in C^{d+1}$, by \eqref{dphi1} and \eqref{dphi2} we have 
\Be
\label{dchi}  \|\chi^{\lambda, \nu}\|_{C^{d+1}}  \lesssim(1+\mathcal{P}_{d+1}^{d+1}) \mathcal{A}_{d+1}. 
\Ee
By integration by parts $(d+1)$-times, one can easily see that 
$  |I(\lambda, \nu)|\lesssim(1+\mathcal{P}_{d+1}^{d+1}) \mathcal{A}_{d+1} (1+\lambda^{{1 \over 2}}|\nabla\Phi(\nu)|)^{-d-1}$. 
Hence, 
\begin{align*}
 \lambda^{{d \over 2}}|I(\lambda, \Phi, \psi)| &\lesssim  (1+\mathcal{P}_{d+1}^{d+1}) \mathcal{A}_{d+1} \sum _{\nu \in \mathcal L_\lambda}  (1+\lambda^{{1 \over 2}}|\nabla\Phi(\nu)|)^{-d-1}.
\end{align*}
It is now sufficient for  \eqref{abz-result} to show the  following lemma, which is  in a slightly more general form than we need it here.

\begin{lem}\label{k-sum}  Let $k \in \mathbb R^d$ and $\lambda\ge 10/\epsilon_0^2$.
Suppose that $ \mathbb B_1\ni x \mapsto \nabla\Phi(x)$  is injective and $ L^\ast (\Phi)>0$. Then, for a constant $C$  we have 
\begin{equation*} 
\sum _{\nu \in \mathcal L_\lambda}
(1+|k+\lambda^{{1 \over 2}}\nabla\Phi(\nu)|)^{-d-1}\le C  L^\ast (\Phi)^{-1}(1+\mathcal{P}_2^d).
\end{equation*}
\end{lem}

\begin{proof}
[Proof  of  Lemma \ref{k-sum}] We split the sum $ \sum _{\nu \in \mathcal L_\lambda}  (1+\lambda^{{1 \over 2}}|\nabla\Phi(\nu)|)^{-d-1}$ into two parts $\rm I$ and $\rm I\!I$ which are given by 
\begin{align*}
 \mathrm I&=\sum _{\nu \in \mathcal L_\lambda: |k+\lambda^{{1 / 2}}\nabla\Phi(\nu)|\ge\sqrt{d}\mathcal{P}_2 }   
\big(1+|k+\lambda^{{1 \over 2}}\nabla\Phi(\nu)|\big)^{-d-1}, 
 \\ 
 \mathrm I\! \mathrm I&= \sum _{\nu \in \mathcal L_\lambda:  |k+\lambda^{{1/ 2}}\nabla\Phi(\nu)|<\sqrt{d}\mathcal{P}_2 }  
\big(1+|k+\lambda^{{1 \over 2}}\nabla\Phi(\nu)|\big)^{-d-1}.  
\end{align*}
If $|k+\lambda^{{1 \over 2}}\nabla\Phi(\nu)|\ge\sqrt{d}\mathcal{P}_2$,  by the mean value inequality  we have 
$|k+\lambda^{{1 \over 2}}\nabla\Phi(\nu)|\ge 2^{-1}|k+\lambda^{{1 \over 2}}\nabla\Phi(x)|$ for $x\in  Q_\nu:= \nu + \lambda^{-\frac12}[-{1/2},{1/2}]^d$. 
Thus, 
\[ 
\big(1+|k+\lambda^{{1 \over 2}}\nabla\Phi(\nu)|\big)^{-d-1}
\le C\lambda^{\frac d2} \int_{Q_\nu} (1+|k+\lambda^{{1 \over 2}}\nabla\Phi(x)|)^{-d-1}dx.\]
Consequently, we have 
\[ \mathrm I\le C_d\lambda^{{d \over 2}}
\int_{
\mathbb B_1
 } (1+|k+\lambda^{{1 \over 2}}\nabla\Phi(x)|)^{-d-1}dx\]
for some constant $C_d$. 
Since $\mathbb B_1\ni x \mapsto \nabla\Phi(x)$  is injective   and since $ L^\ast (\Phi)>0$, changing  variables $x \mapsto (\nabla\Phi)^{-1} (x)$,  we get
\begin{equation*}
\mathrm I\lesssim\int_{\nabla \Phi(\mathbb B_1)}(1+|k+y|)^{-d-1}|\det \mathrm{H} \Phi( (\nabla\Phi)^{-1} (y))|^{-1}dy\lesssim L^\ast (\Phi)^{-1}.
\end{equation*}

Now we consider the sum  $\rm I\!I$. Using $(1+|k+\lambda^{{1 \over 2}}\nabla\Phi(\nu)|)^{-d-1}\le 1$ and following the same argument, we see ${\rm I\!I}$ is bounded above by       
\begin{align*}
  \lambda^{\frac d2} \int_{x\in \mathbb B_1: |k+\lambda^\frac12 \nabla\phi(x)|\le 2\sqrt{d}\mathcal{P}_2} dx  
                  \lesssim 
                   \int_{y\in \nabla \Phi(\mathbb B_1): |k+y|\le  2\sqrt{d}\mathcal{P}_2}|\det \mathrm{H} \Phi( (\nabla\Phi)^{-1} (y))|^{-1}dy.\end{align*}
                  The last is clearly bounded by $\lesssim L^\ast (\Phi)^{-1}\mathcal{P}_2^d.
$
\end{proof}

\subsection{Estimate with reduced regularity}  In the above we have observed  that the wave packet decomposition provides an easy way to prove a uniform 
stationary phase estimate $\eqref{ABZ1}$. We elaborate the argument to lower  required  order of  regularity. 
To improve the preliminary result, we expand  $\chi^{\lambda, \nu}$ into Fourier series which gives  rise to another summation. In order to guarantee  absolute summability of the series we  need to show the Fourier coefficients of ${\chi^{\lambda, \nu}}$ decay fast enough, i.e.,  $(\chi^{\lambda, \nu})^\wedge(k)=O(|k|^{-d-\epsilon})$ for some $\epsilon>0$. However, in view of the typical (integration by parts) argument,  we need to differentiate more than $d$-times, hence we are forced to assume differentiability of order higher than $d$. To get over this,  we  examine   $\chi^{\lambda, \nu}$ carefully.

  Recalling that $\chi^{\lambda, \nu}$ is supported in $\mathbb B_{2}$, for $k\in \mathbb N_0^d$ we set 
\[C^{\lambda, \nu}_k =(2\pi)^{-d}\int e^{-ik \cdot x} \chi^{\lambda, \nu}(x)dx.\] 
Using  \eqref{il},  we have 
\begin{equation}
\label{decompfc}
\lambda^{{d \over 2}} I(\lambda, \Phi, \psi)= \sum _{\nu \in \mathcal L_\lambda} 
 \sum_{k\in \mathbb{Z}^d} C^{\lambda, \nu}_k\int e^{i(\lambda^{{1 \over 2}}\nabla\Phi(\nu)+k) \cdot x} \phi(x)\, dx.
\end{equation} 
 The last integral equals $\widehat \phi(\lambda^{{1 \over 2}}\nabla\Phi(\nu)+k)$ which vanishes rapidly if $|k|\gg\lambda^\frac12$.
Thus, the main contribution comes from $|k|\lesssim  \lambda^{1/2}$. Under such a condition, we have the following:

\begin{lem}\label{EFC1}
Let $n_0>(d+2)/2$.  Let $\Phi \in C^{n_0}(\mathbb B_1)$ and $\psi \in C^{[{{d}/{2}}]+1}_c(\mathbb B_{1-\epsilon_0})$.  If   $1\le |k|\le 5\max\{\mathcal P_2,1\}\lambda^{1 \over 2}$, then  there is a constant $C>0$, independent of $\lambda, \nu$,  such that 
\begin{align}
\label{cln}
|C^{\lambda, \nu}_k| \le C \mathcal{A}_{[{{d}\over{2}}]+1}(1+\mathcal{P}_{n_0}^{d+2})|k|^{-(d+\epsilon)}
\end{align}
for some $\epsilon>0$. 
\end{lem}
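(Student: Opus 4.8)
The plan is to estimate the Fourier coefficient $C^{\lambda,\nu}_k$ by integration by parts, but carried out only $[d/2]+1$ times rather than $d+1$ times, and to make up for the remaining decay of order $\alpha$ by a Hölder-regularity argument applied to the top-order derivatives. Write $\chi^{\lambda,\nu}(x)=e^{i\lambda\Phi^{\lambda,\nu}(x)}g^{\lambda,\nu}(x)$ with $g^{\lambda,\nu}(x):=\psi(\lambda^{-1/2}x+\nu)\tilde\phi(x)$. Since $k\neq 0$, I would integrate by parts $m:=[d/2]+1$ times using the operator $k\cdot\nabla/(i|k|^2)$. Each integration by parts either hits the amplitude $g^{\lambda,\nu}$ — which is controlled in $C^{[d/2]+1}$ by $\mathcal A_{[d/2]+1}$ because the $\lambda^{-1/2}$ scaling only helps — or brings down a factor $i\lambda\nabla\Phi^{\lambda,\nu}$, which by \eqref{dphi1} is $O(\mathcal P_2)$, together with higher derivatives $\mathcal D^\gamma(\lambda\Phi^{\lambda,\nu})$ that by \eqref{dphi2} are $O(\mathcal P_{|\gamma|}\lambda^{-(|\gamma|-2)/2})$ and hence harmless for $|\gamma|\le m$. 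After $m$ steps one obtains
\begin{align*}
|C^{\lambda,\nu}_k|\lesssim |k|^{-m}\,(1+\mathcal P_{m}^{m})\,\mathcal A_{[d/2]+1}\,\Big\|\,\textup{(sum of terms of the above type)}\,\Big\|_{\infty},
\end{align*}
where the bracketed quantity is a finite sum of products of derivatives of $\lambda\Phi^{\lambda,\nu}$ of total order $\le m$ and one derivative of $g^{\lambda,\nu}$ of order $\le m$.

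The point is that $m=[d/2]+1\ge d/2+1/2$, so $|k|^{-m}\le|k|^{-d/2-1/2}$ when $d$ is odd and $|k|^{-m}=|k|^{-d/2-1}$ when $d$ is even; in either case this is already better than $|k|^{-(d+\alpha)/?}$ — no, this is where the hypothesis $|k|\lesssim \max\{\mathcal P_2,1\}\lambda^{1/2}$ enters, and the honest statement is that $|k|^{-m}$ alone does not suffice. So after the $m$ integrations by parts I would not simply take absolute values but extract one more factor of decay of size $|k|^{-\alpha}$ by exploiting Hölder continuity. Concretely, in the worst term the last derivative falls on a factor that lies only in $C^{0,\alpha}$ at the relevant scale — this is exactly what \eqref{dphi3} provides, $\|\mathcal D^\gamma(\lambda\Phi^{\lambda,\nu})\|_{C^{0,\alpha}}\lesssim \mathcal P_{|\gamma|+\alpha}\lambda^{-(|\gamma|-2)/2}$, and $\psi\in C^{[d/2]+1}$ gives the amplitude the same Hölder slack if $d$ is even (for $d$ odd the half-integer of smoothness already suffices). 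The standard device is: for a function $h\in C^{0,\alpha}$ supported in a fixed ball, $|\widehat h(k)|\lesssim |k|^{-\alpha}\|h\|_{C^{0,\alpha}}$, proved by splitting $\widehat h(k)=\tfrac12\int(h(x)-h(x+\pi k/|k|^2))e^{-ik\cdot x}\,dx$ and using the modulus of continuity on the $\sim|k|^{-1}$ translation. Applying this to the oscillatory integral left after $m-1$ genuine integrations by parts (so that the "+1" is replaced by the fractional gain) yields the extra $|k|^{-\alpha}$, and collecting the constants gives the factor $(1+\mathcal P_{n_0}^{d+2})\mathcal A_{[d/2]+1}$, where $n_0=d/2+1+\alpha$ is precisely the highest Hölder norm of the phase that appears (a derivative of $\lambda\Phi^{\lambda,\nu}$ of order $[d/2]+1$ measured in $C^{0,\alpha}$, i.e.\ $\mathcal P_{[d/2]+1+\alpha}\le\mathcal P_{n_0}$ after accounting for the parity adjustment).

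I would organize the bookkeeping via the Faà di Bruno / Leibniz expansion of $\mathcal D^\beta\big(e^{i\lambda\Phi^{\lambda,\nu}}g^{\lambda,\nu}\big)$ for $|\beta|\le m$: it is a sum over partitions of products $\prod_j (\lambda\mathcal D^{\gamma_j}\Phi^{\lambda,\nu})\cdot \mathcal D^{\gamma_0}g^{\lambda,\nu}\cdot e^{i\lambda\Phi^{\lambda,\nu}}$ with $\sum|\gamma_j|\le|\beta|$ and each $|\gamma_j|\ge 1$. Using \eqref{dphi1} for the $|\gamma_j|=1$ factors (bounded by $\mathcal P_2$, the source of the $\mathcal P_2^m$ hence $\mathcal P_{n_0}^{d+2}$ after the Hölder step bumps one index up and a crude power count absorbs the rest) and \eqref{dphi2} for the $|\gamma_j|\ge 2$ factors (which carry negative powers of $\lambda$ and are $\lesssim\mathcal P_{n_0}$), every term is bounded uniformly in $\lambda,\nu$, and exactly one term — the one with a single factor $(\lambda\nabla\Phi^{\lambda,\nu})^m$ and $\gamma_0=0$, or symmetrically one top-order derivative — is the bottleneck that needs the Hölder trick. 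The main obstacle is this last point: making the fractional integration-by-parts rigorous with constants that depend only on $d$ and on the stated norms, in particular verifying that the Hölder seminorm of the product appearing after $m-1$ clean steps is still controlled by $\mathcal A_{[d/2]+1}(1+\mathcal P_{n_0}^{d+2})$, which requires the submultiplicativity of Hölder norms on a fixed ball and careful tracking of which single factor is allowed to be merely $C^{0,\alpha}$. The constraint $|k|\le 5\max\{\mathcal P_2,1\}\lambda^{1/2}$ is used only to guarantee that all the $\lambda^{-(|\gamma_j|-2)/2}$ factors, when multiplied against positive powers of $|k|$ that might appear from cross terms, stay bounded — equivalently that $|k|\lambda^{-1/2}\lesssim \mathcal P_2$ — so nothing in the high-frequency regime $|k|\gg \mathcal P_2\lambda^{1/2}$ (handled separately in the main proof) interferes.
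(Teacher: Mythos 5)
Your overall architecture matches the paper's: integrate by parts $[\frac d2]+1$ times with $\mathcal D_{(k)}=\frac{k}{|k|}\cdot\nabla$, expand by Leibniz/Fa\`a di Bruno, and recover fractional decay from H\"older continuity via the translation-by-$\pi k/|k|^2$ device (this is exactly the paper's Lemma \ref{HOLDER}). But your decay budget does not close. You allot $|k|^{-([\frac d2]+1)}$ from the genuine integrations by parts plus a single extra factor $|k|^{-\alpha}$ from the H\"older step, i.e.\ $|k|^{-([\frac d2]+1+\alpha)}$ in total, while the target is $|k|^{-(d+\alpha)}$. For $d\ge 3$ you are short by $d-[\frac d2]-1\approx \frac d2-1$ powers of $|k|$, and nothing in your argument supplies them. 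The missing mechanism is the one the paper singles out as the key observation: every derivative that falls on the rescaled amplitude $\psi(\lambda^{-1/2}\cdot+\nu)$, and every phase derivative of order $\ge 3$, produces a factor $\lambda^{-1/2}$ per unit of order (see \eqref{dphi2}), and the hypothesis $|k|\le 5\max\{\mathcal P_2,1\}\lambda^{1/2}$ is used to \emph{trade} each such $\lambda^{-1/2}$ for $\mathcal P_2|k|^{-1}$, i.e.\ for genuine additional decay in $|k|$. This trade, not mere boundedness, is what bridges the gap between $[\frac d2]+1$ and $d$; the paper's intermediate estimate \eqref{m1m2m3} is written precisely in the form $\lambda^{-j/2}|k|^{-(\cdots-j)}$ to record it. You explicitly use the $\lambda^{-1/2}$ factors and the constraint on $|k|$ only to keep sup norms bounded ("every term is bounded uniformly in $\lambda,\nu$"), which discards exactly the decay you need.

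A secondary issue is your identification of the bottleneck term. The term with $m$ factors of $\lambda\nabla\Phi^{\lambda,\nu}$ and no derivative on the amplitude still lies in $C^{n_0-1}=C^{\frac d2+\alpha}$, so it admits roughly $\frac d2+\alpha$ further orders of decay and is comparatively benign (the paper's second case). The genuinely delicate terms are those containing a single high-order factor $\mathcal D_{(k)}^{i_1}(\lambda\Phi^{\lambda,\nu})$ with $i_1$ close to $[\frac d2]+1$: such a factor lies only in $C^{n_0-i_1}$, which can be as low as $C^{0,\alpha}$, so the H\"older gain from smoothness alone is tiny; compensation comes from the prefactor $\lambda^{-(i_1-2)/2}$ carried by that factor, again converted into powers of $|k|^{-1}$. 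The paper handles this by writing the factor as $\lambda^{-(i_1-2)/2}(\mathcal D_{(k)}^{i_1}\Phi)(\lambda^{-1/2}x+\nu)$ and invoking Lemma \ref{HOLDER} for a dilated function, whose $\lambda^{-p\beta}$ term supplies yet another fractional gain. To repair your proof you would need to organize the expansion so that, for each term, the sum of (orders of further integration by parts available) plus (the $j$ in the accompanying $\lambda^{-j/2}$ factor) plus (the fractional H\"older gain) reaches $d-[\frac d2]-1+\alpha$, which is exactly the three-case analysis in the paper.
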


Compared with the typical argument using integration by parts, the desired decay of Fourier coefficients is obtained under a weaker regularity assumption.  
The key observation is that differentiations on $\chi^{\lambda, \nu}$ produce extra factors of $\lambda^{-\frac12}$ and  these  factors can be converted to get additional decay   because $|k|\lesssim \lambda^\frac12$. 

In order to exploit the extra regularity given by H\"older continuity, we make use of the following lemma which allows us to deal with functions of holder continuity of order $\alpha$ as if they were $\alpha$ times differentiable. 

\begin{lem}\label{HOLDER}
Let $n$ be an nonnegative integer and $0\le\beta<1$. If  $f \in C^{n+\beta}(\mathbb B_2)$ and $g\in C_c^{n+1+\beta}(\mathbb B_2)$, then we have
\begin{align}
\label{1beta}
\left|\int f(\lambda^{-p}x) g(x)e^{-ik\cdot x}dx\right| &\lesssim \frac{(\lambda^{-p\beta}+|k|^{-1})}{ |k|^{n+\beta}}  \lVert f\lVert_{C^{n+\beta}(\mathbb B_{2})} 
\lVert g\lVert_{C^{n+1+\beta}(\mathbb B_{2})}.
\end{align}
with the implicit constant independent of $p\ge0$, $\lambda>1$.
\end{lem}

\begin{proof}
[Proof  of  Lemma \ref{HOLDER}]
We begin with recalling the well-known estimate  
\Be \label{0beta}
\left|\int h(x) e^{-ik\cdot x}dx\right| \lesssim \lVert h\lVert_{C^{\beta}(\mathbb B_2)} 
|k|^{-\beta}
\Ee
for $0\le\beta<1$ whenever $h\in C^{\beta}_c(\mathbb B_2)$.  After integration by parts $n$-times using 
\[\mathcal{D}_{(k)}:={k \over |k|}\cdot \nabla,\] we need only to  show that \eqref{1beta} with $n=0$.
Using $2e^{-ik\cdot x}=e^{-ik\cdot x}-e^{-ik\cdot (x+{\pi k\over|k|^2})}$ 
and change of variables $x\to x+ {\pi k\over|k|^2}$, we see that $\int f(\lambda^{-{p}}x) g(x)e^{-ik\cdot x}dx$ equals
\begin{align*}
{\frac 12}\int \Big(f(\lambda^{-p}x) g(x)-f(\lambda^{-{p}}(x-{\pi k\over|k|^2})) g(x-{\pi k\over|k|^2})\Big)e^{-ik\cdot x}dx.
\end{align*}
We break 
$
\int f(\lambda^{-p}x) g(x)e^{-ik\cdot x}dx=I+I\!I,  
$
where 
\begin{align*}
I&=\int \Big(f(\lambda^{-p}x)-f(\lambda^{-p}(x-{\pi k\over|k|^2}))\Big) g(x)e^{-ik\cdot x}dx,\\
I\!I&=\int f (\lambda^{-p}(x-{\pi k\over|k|^2})) \Big(g(x)-g(x-{\pi k\over|k|^2})\Big)e^{-ik\cdot x}dx.
\end{align*}
 Since $f\in C^{\beta}$, it is clear that $|I|\le C\lVert f\lVert_{C^{\beta}} \lVert g\lVert_{C^{0}}\lambda^{-p\beta}|k|^{-\beta}$. For $I\!I$, we write 
\[g(x)- g(x-{\pi k\over|k|^2})=\int_{0}^{{\pi \over|k|}} {k \over |k|}\cdot \nabla g(x-\tau{k\over|k|}) d\tau.\]
Thus,
\[ I\! I= \int_{0}^{{\pi \over|k|}} \int f(\lambda^{-p}(x-{\pi k\over|k|^2}))\, {k \over |k|}\cdot \nabla g(x-\tau{k\over|k|})\,  e^{-ik\cdot x}dx \,d\tau . \]
Since $f$ and $\nabla g \in C^{\beta}$,  $\| f(\lambda^{-p}(\cdot-{\pi k\over|k|^2})) {k \over |k|}\cdot \nabla g(\cdot-\tau{k\over|k|})\|_{ C^{\beta}} \lesssim  \|f \|_{ C^{\beta}}\|g\|_{ C^{1+\beta}}$. So, using \eqref{0beta},  the inner integral  is bounded by $|k|^{-\beta}\|f\|_{C^{\beta}}\|g\|_{C^{1+\beta}}$ and, hence, we see that $|I\!I|\lesssim |k|^{-1-\beta}\|f\|_{C^{\beta}}\|g\|_{C^{1+\beta}}$.
\end{proof}

We prove  Lemma \ref{EFC1} by making use of  Lemma \ref{HOLDER}. To show \eqref{cln}, we exploit  the factor $\lambda^{-1/2}$ which is generated by differentiation. Since $n_0> (d+2)/2$, we may assume 
\Be 
\label{n0}
n_0= \frac{d+2} 2+ \alpha
\Ee
for some $0<\alpha<1/2$. 

\begin{proof}[Proof  of  Lemma \ref{EFC1}] 
We  write
\begin{align*}
(\mathcal{D}_{(k)})^{[{{d}\over{2}}]+1 }\chi^{\lambda, \nu}(x)=\sum_{m_1+m_2+m_3=[{{d}\over{2}}]+1} f_{m_1}(x)g_{m_2}(x)h_{m_3}(x),
\end{align*}
where 
\begin{align*}
  f_{m}(x)&=\mathcal{D}_{(k)}^{m}\tilde{\phi}(x), \\
   g_m(x)&=\mathcal{D}_{(k)}^{m}(\psi(\lambda^{-{1/2}}x+\nu)), \\
    h_m(x)&=\mathcal{D}_{(k)}^{m}(e^{i\lambda\Phi^{\lambda, \nu}(x)}). 
  \end{align*}
Applying integration  by parts $[{{d}/{2}}]+1$ times, we have 
\begin{equation}
\label{ckest}
|C_k^{\lambda, \nu}|\lesssim |k|^{-[{{d}\over{2}}]-1} \sum_{m_1+m_2+m_3=[{{d}\over{2}}]+1} \Big|\int e^{-ik\cdot x}f_{m_1}(x)g_{m_2}(x)h_{m_3}(x)dx\Big|.
\end{equation}

 In order to show \eqref{cln}, we further apply integration by parts. However, we need to examine  those functions to ensure uniform bounds.   The factors $f_m$ and  $g_m$ are easier to handle. Note that  $\lVert f_{m}\rVert_{C^{d+1}}\lesssim1$ and
 \Be 
 \label{gm}
 \lVert g_{m}(\lambda^{\frac12}\cdot)\rVert_{C^{[{{d}\over{2}}]+1-m}}\lesssim \mathcal{A}_{[{{d}\over{2}}]+1}\lambda ^{-\frac m2}
 \Ee 
 for any $0\le m\le [{{d}/{2}}]+1$. 
 We first consider the case  $m_3=0$ and claim 
 \begin{equation}
 \label{m30}
|\int e^{-ik\cdot x}f_{m_1}g_{m_2}h_{0}dx|\lesssim \mathcal{A}_{[{{d}\over{2}}]+1}(1+\mathcal{P}_{n_0}^{d+2-m_2})\lambda^{-\frac {m_2} 2}|k|^{-[{{d}\over{2}}]-1+m_2}.
\end{equation}
Being combined with \eqref{ckest}, this shows the contributions of the terms with $m_3=0$ are $O(|k|^{-d-1/2})$ since $|k|\le 5\max\{\mathcal P_2,1\}\lambda^{1 \over 2}$. To show \eqref{m30}, from \eqref{dphi1}--\eqref{dphi3} we note 
$ |\mathcal{D}_{(k)}^{\gamma}  h_0|\lesssim (1+\mathcal{P}_{n_0}^{d+2-m_2}) $ if $\gamma\le [d/2]+1-m_2$. 
Thus, via integration by parts $[d/2]+1-m_2$ times, using \eqref{gm}   we  get \eqref{m30}.

For the rest of the proof, we assume $m_3\ge 1$.  For $m\ge 1$, we write $h_{m}$ as
\begin{align*}
h_{m}(x)=\sum_{n=1}^{m}  \Big(\sum_{i_1+ \cdots +i_n=m, \, 1\le  i_n\le  \cdots \le i_1} C_{n,i_1, \cdots i_n}\,{h_{m,n}^{i_1,\cdots,i_n}(x)}\Big), 
\end{align*}
where $C_{n,i_1, \cdots i_n}$ is a constant and 
\[h_{m,n}^{i_1,\cdots,i_n}(x)=\prod_{l=1}^{n}(\mathcal{D}_{(k)}^{i_l} \lambda\Phi^{\lambda, \nu}(x)) e^{i\lambda\Phi^{\lambda, \nu}(x)}.\] 
Note that  $h_{m,n}^{i_1,\cdots,i_n}\in C^{n_0-i_1}$. Recalling \eqref{n0}, for \eqref{cln} we only need to show  
\begin{equation}
\label{mni}
|\int e^{-ik\cdot x}f_{m_1}g_{m_2}h_{m_3,n}^{i_1,\cdots,i_n}dx|\lesssim \mathcal{A}_{[{{d}\over{2}}]+1}(1+\mathcal{P}_{n_0}^{d+2})|k|^{-(d+2\alpha-[{{d}\over{2}}]-1 )}
\end{equation}
for each fixed $m_1,m_2,m_3,n,$ and $i_1,\cdots,i_n$ which  satisfies  $i_1+ \cdots +i_n=m_3$ and $1\le  i_n\le  \cdots \le i_1$.  Combining this with  \eqref{ckest} immediately shows \eqref{cln}.

In order to show \eqref{mni}, 
we write
   \[ f_{m_1}(x)g_{m_2}(x)h_{m_3,n}^{i_1,\cdots,i_n}(x)=F(\lambda^{-\frac 12}x)G(x),\] 
where 
  \begin{align*}
  F(x)&=g_{m_2}(\lambda^{\frac 12}x)\prod_{i_l\ge2}\lambda^{-\frac {i_l-2}2}(\mathcal{D}_{(k)}^{i_l}\Phi)(x+\nu),
  \\
  G(x)&=f_{m_1}(x)\prod_{i_l=1}(\mathcal{D}_{(k)}^{i_l} \lambda\Phi^{\lambda, \nu}) e^{i\lambda\Phi^{\lambda, \nu}(x)}.
  \end{align*} 
   By Lemma \ref{HOLDER}, it is enough for \eqref{mni} to prove 
\begin{equation}
\label{m1m2m3norm}
\lVert  F \rVert_{C^{n_0-j}} \lVert G\rVert_{C^{n_0-j+1}}\lesssim\mathcal{A}_{[{{d}\over{2}}]+1}(1+\mathcal{P}_{n_0}^{d+3-j})\lambda^{-\frac {j-2}2}
\end{equation}
for  some integer $ j\in [2, n_0)$.  
Indeed, by Lemma \ref{HOLDER} and the assumption $|k|\le 5\max\{\mathcal P_2,1\}\lambda^{1 \over 2}$ we obtain  \eqref{mni}.

It remains to show \eqref{m1m2m3norm}. Since $\Phi \in C^{n_0}(\mathbb B_1)$ and $\psi \in C^{[{{d}/{2}}]+1}_c(\mathbb B_{1-\epsilon_0})$, we note 
 $F\in  C_c^{[d/2]+1-m_2}(\mathbb B_{1}(-\nu))\cap C_c^{n_0-i_1}(\mathbb B_{1}(-\nu))$ and $G\in C^{n_0-1}_c(\mathbb B_{2})$ where $\mathbb B_{1}(-\nu)=\{x\in \mathbb{R}^d:|x+\nu|<1\}$.  We show \eqref{m1m2m3norm} by considering the three cases 
\[ i_1,m_2\le1,  \quad m_2\ge \max\{i_1,2\}, \quad i_1>\max\{m_2,1\},\]
separately.  First,  if $i_1,m_2\le1$, then $F\in C^{n_0-2}(\mathbb B_{2})$. Using 
\eqref{gm} and  \eqref{dphi1}--\eqref{dphi3},  we see that  \eqref{m1m2m3norm} holds with $j=2$.  Secondly, if $m_2\ge \max\{i_1,2\}$, then $F\in C^{[d/2]+1-m_2}(\mathbb B_{2})\subset C^{n_0-m_2-1}(\mathbb B_{2})$.  Similarly, by \eqref{gm} and  \eqref{dphi1}--\eqref{dphi3} it follows that  \eqref{m1m2m3norm} holds with $j=m_2+1$.
Lastly, if $i_1> \max\{m_2,1\}$, then $F\in C^{n_0-i_1}(\mathbb B_{2})$. Since  $F$ includes $\lambda^{-(i_1-2)/2}$ as one of  its factors, using  \eqref{gm} and  \eqref{dphi1}--\eqref{dphi3},  we see that \eqref{m1m2m3norm} holds with $j=i_1$.
\end{proof}

We are now ready to prove Theorem \ref{PS0}.

\begin{proof}
[Proof  of  Theorem \ref{PS0}]   We may assume that $\lambda\ge 10/\epsilon_0^2$.  
We first consider the case $|\nabla \Phi (0)| \ge 2\max\{\mathcal{P}_2,1\}$. In this case, we have $|\eta \cdot \nabla \Phi (x)|\ge \max\{\mathcal{P}_2,1\}$ for $|x|\le1$ where $\eta=|\nabla \Phi (0)|^{-1} \nabla \Phi (0)$. Using the operator $|\eta \cdot \nabla \Phi (x)|^{-1}(\eta\cdot \nabla)$, via integration by parts $([{d/2}]+1)$--times we get 
\[
\left| I(\lambda, \Phi, \psi)\right|\lesssim(\max\{\mathcal{P}_2,1\}\lambda)^{-[{d\over2}]-1}\mathcal{A}_{[{{d}\over{2}}]+1}\big(1+\mathcal{P}_{[{{d}/{2}}]+1}^{[{{d}/{2}}]+1}\big).
\]

Thus, we may now assume 
$|\nabla \Phi (0)| < 2\max\{\mathcal{P}_2,1\}$.  
Recalling \eqref{decompfc}, 
 we split the sum over $k$ into two parts to get  
\begin{align*} 
|\lambda^{{d \over 2}}I(\lambda, \Phi, \psi)|\le  \sum _{\nu \in \mathcal L_\lambda}    \big( A(\nu)+ B(\nu)\big), 
\end{align*}
where 
\begin{align*}
A(\nu):=\sum_{ |k|\ge 5\max\{\mathcal P_2,1\}\lambda^{1 \over 2}} |C^{\lambda, \nu}_k |
|\widehat \phi(\lambda^{{1 \over 2}}\nabla\Phi(\nu)+k)|,  
\\
B(\nu):=\sum_{ |k|< 5\max\{\mathcal P_2,1\}\lambda^{1 \over 2}} |C^{\lambda, \nu}_k |
|\widehat \phi(\lambda^{{1 \over 2}}\nabla\Phi(\nu)+k)|.
\end{align*}
We first deal with  $A(\nu)$. Since $|\nabla \Phi (0)| < 2\max\{\mathcal{P}_2,1\}$, we have $|\lambda^{{1 \over 2}}\nabla\Phi(\nu)|\le 4\max\{\mathcal{P}_2,1\} \lambda^{1 \over 2}$ for $\nu \in \mathcal L_\lambda$. 
Since $|k|\ge 5\max\{\mathcal P_2,1\}\lambda^{1 \over 2}$, we have $|\lambda^{{1 \over 2}}\nabla\Phi(\nu)+k|\sim |k|$. By the rapid decay of $\widehat \phi$ it follows that 
\begin{align*}
|\widehat \phi(\lambda^{{1 \over 2}}\nabla\Phi(\nu)+k)| \lesssim   (1+|\lambda^{{1 \over 2}}\nabla\Phi(\nu)+k|)^{-d-1}|k|^{-d-1}
\end{align*}
for $|k|\ge 5\max\{\mathcal P_2,1\} \lambda^{1 \over 2}$. 
Since $|C^{\lambda, \nu}_k|\le \int |\psi(\lambda^{-{1 \over 2}}x + \nu)\tilde{\phi}(x)|dx \lesssim \mathcal{A}_0$,  we get
\[
A(\nu) \lesssim  
      \mathcal A_0   \sum_{|k|\ge 5\max\{\mathcal P_2,1\}\lambda^{1 \over 2}}  |k|^{-d-1}
             (1+|\lambda^{{1 \over 2}}\nabla\Phi(\nu)+k|)^{-d-1}.\]
Thus, Lemma \ref{k-sum} yields 
\Be 
\label{asum}
 \sum _{\nu \in \mathcal L_\lambda}    A(\nu)   
     \lesssim { L^\ast (\Phi)}^{-1}  \mathcal{A}_0 (1+\mathcal{P}_2^d). 
 \Ee

We now consider $\sum_\nu B(\nu)$. Since $|k|\le 5 \lambda^{1 \over 2}\max\{\mathcal P_2,1\}$,  by \eqref{cln} we have
\[ B(\nu) 
 \lesssim  
      \mathcal \mathcal{A}_{[{{d}\over{2}}]+1}(1+\mathcal{P}_{n_0}^{d+2})  \sum_{|k|< 5\max\{\mathcal P_2,1\}\lambda^{1 \over 2}}    |k|^{-(d+\alpha )}     (1+|\lambda^{{1 \over 2}}\nabla\Phi(\nu)+k|)^{-d-1}.\]
The rest of the argument is the same as before. In fact, using  Lemma \ref{k-sum},  we get     
\begin{align*}
 \sum _{\nu \in \mathcal L_\lambda}    B(\nu) 
&\lesssim { L^\ast (\Phi)}^{-1} \mathcal{A}_{[{{d}\over{2}}]+1}(1+\mathcal{P}_{n_0}^{d+2})  (1+\mathcal{P}_2^d).
\end{align*}
Combining this  and \eqref{asum}, we obtain 
\[\lambda^{\frac d2}|I(\lambda, \Phi, \psi)|\lesssim { L^\ast (\Phi)}^{-1} \mathcal{A}_{[{{d}\over{2}}]+1}(1+\mathcal{P}_{n_0}^{2d+2}).\]
Finally, since $I(\lambda,\Phi,\psi)=I(t\lambda,t^{-1}\Phi,\psi)$, taking $t=1+\mathcal{P}_{n_0}$ we obtain \eqref{main-est}.
\end{proof}

\subsection{Phase and amplitude  depending on the oscillatory parameter $\lambda$}
\label{sec:pa}
From the previous argument we can expect that, as long as the regularity assumption is high enough,  a similar result holds even if $\psi_\lambda(x)$ has some bad behavior in $\lambda$.

 Similarly as before, let us set 
 \begin{align*}  
  \Phi_\lambda^{\lambda, \nu}&:=(\Phi_\lambda)^{\lambda, \nu}, 
  \\ 
    \chi_\lambda ^{\lambda, \nu}(x)&:=e^{i\lambda\Phi_\lambda^{\lambda, \nu}(x)}\psi_\lambda(\lambda^{-{1 \over 2}}x + \nu)\tilde{\phi}(x). 
       \end{align*}
By the same decomposition as in Section \ref{sec:wpd} with $\Phi_\lambda, \psi_\lambda$, we have an analogue  of \eqref{decompfc}:
\Be 
\lambda^{\frac d2}|\mathcal {\widetilde I}(\lambda)| \le  \sum_{k\in\mathbb{Z}^d}\sum_{\nu \in \mathcal L_\lambda } |(\chi_\lambda^{\lambda, \nu})^{\wedge}(k)| | \widehat \phi(\lambda^\frac12\nabla\Phi_\lambda(\nu)+k)|. 
\label{l-sum}
\Ee
 For the case $0\le \beta\le 1/2$, the optimal decay estimate $\lambda^{-d/2}$ is relatively easy to show.  In fact,  the estimate follows
 once we have the desired estimate 
 \Be 
 \label{dl-chi} \|\chi_\lambda^{\lambda, \nu}\|_{C^{d+1}}  \lesssim (1+{\tilde {\mathcal P}}^{d+1}) {\tilde{\mathcal  A}}
 \Ee
for  $0\le\beta\le 1/2$.   From \eqref{LSA-psi} it is clear that 
\Be
\label{psipsi}
|\mathcal{D}_x^{\gamma}(\psi_\lambda(\lambda^{-\frac12}x+\nu))|\le\tilde{\mathcal  A}\lambda^{|\gamma|(\beta-\frac12)}, \quad 0\le |\gamma|\le d+1.
\Ee
From  \eqref{LSA-Phi1}, we also have $|\nabla_x\lambda \Phi_\lambda^{\lambda, \nu}(x)|\le\tilde{\mathcal P}$ and 
\begin{align*}
|\mathcal{D}_x^{\gamma}\lambda \Phi^{\lambda, \nu}_\lambda(x)|
       \lesssim\tilde{\mathcal P} \lambda^{(|\gamma|-2)(\beta-\frac12)}, \quad  2\le |\gamma|\le d+1. 
\end{align*}
 Combining those estimates, one can easily  obtain \eqref{dl-chi}.  
By  \eqref{dl-chi} it follows that $| (\chi_\lambda^{\lambda, \nu})^\wedge(k)|  \lesssim |k|^{-d-1}(1+{\tilde {\mathcal P}}^{d+1}) {\tilde{\mathcal  A}}$. Thus,     \eqref{l-sum} and  Lemma \ref{k-sum} yield
\eqref{i-lambda} for $0<\beta\le 1/2$ as desired.  

Furthermore, refining the above argument in a similar manner as in the proof of Lemma \ref{EFC1}, one can show the statement in Remark \ref{rem4}.

\begin{proof}
[Proof  of  Theorem \ref{LSE}]
The proof  is almost identical to the previous argument for $0<\beta\le 1/2$ except for the estimate for $(\chi_\lambda^{\lambda, \nu})^\wedge(k)$. Indeed, we can reduce the problem to proving a good decay of the Fourier coefficients $(\chi_\lambda^{\lambda, \nu})^\wedge(k)$. 
By \eqref{l-sum} and Lemma \ref{k-sum} with $\Phi$ replaced by $\Phi_\lambda$, we have
\[\lambda^{\frac d2} |{\widetilde I}(\lambda)|\lesssim \frac1{ L^\ast (\Phi_\lambda)}(1+\tilde {\mathcal P}^{d})\sum_{k\in\mathbb{Z}^d}\sup_{\nu}{|(\chi_\lambda^{\lambda, \nu})^\wedge(k)|}.\]

In this case, we can not use the estimate \eqref{dl-chi}.  Instead,   by \eqref{LSA-Phi1}  we have
\[|\mathcal{D}_x^{\gamma}e^{i\lambda(\Phi^{\lambda, \nu}_\lambda(x))}|\lesssim (1+\tilde {\mathcal P}^{d+1})\lambda^{|\gamma|(\beta-\frac 12)}, \quad 0\le|\gamma|\le d+1.\]
Note that the power of $\lambda$ is positive. Combining this and \eqref{psipsi}, by integration by parts $d+1$ times we get 
\begin{equation}
\label{esthat}
|(\chi_\lambda^{\lambda, \nu})^\wedge(k)|\lesssim  (1+\tilde {\mathcal P}^{d+1})\tilde{\mathcal  A}\lambda^{(\beta-\frac12)(d+1)}|k|^{-d-1}.
\end{equation}
 \eqref{esthat} and the trivial estimate $|(\chi_\lambda^{\lambda, \nu})^\wedge(k)|\lesssim\tilde{\mathcal  A}$ give
\begin{align*}
\sum_{k\in\mathbb{Z}^d}\sup_\nu|(\chi_\lambda^{\lambda, \nu})^\wedge(k)|&\lesssim \sum_{|k|\le \lambda^{\beta-\frac12}}\tilde{\mathcal  A}+\sum_{|k|\ge \lambda^{\beta-\frac12}}(1+\tilde {\mathcal P}^{d+1})\tilde{\mathcal  A}\,\lambda^{(\beta-\frac12)(d+1)}|k|^{-d-1}\\
&\lesssim\tilde{\mathcal  A} \lambda^{d(\beta-\frac12)}+(1+\tilde {\mathcal P}^{d+1})\tilde{\mathcal  A}\lambda^{d(\beta-\frac12)}. 
\end{align*}
Thus, we obtain $\lambda^{\frac d2} |{\widetilde I}(\lambda)|\lesssim L^\ast (\Phi_\lambda)^{\!\!-1}(1+\tilde {\mathcal P}^{2d+1})\tilde{\mathcal A}$. Recalling $I(\lambda,\Phi_\lambda,\psi_\lambda)=I(t\lambda,t^{-1}\Phi_\lambda,\psi_\lambda)$ and taking $t=1+\tilde{\mathcal{P}}$,  we see that \eqref{i-lambda} holds. 
\end{proof}

\section*{Acknowledgement.}  This work was supported by the National Research Foundation (Republic of Korea) grant 2022R1A4A1018904(Lee) and KIAS individual grant MG089101(Oh).

\end{document}